\newcommand{\jacobi}[2]{\genfrac{(}{)}{.8pt}{}{#1}{#2}}
\begin{document}

\newtheorem{observacion}{Remark}
\newtheorem{theorem}{Theorem}[section]
\newtheorem{corollary}[theorem]{Corollary}
\newtheorem{lemma}[theorem]{Lemma}
\newtheorem{proposition}[theorem]{Proposition}
\newtheorem{definition}[theorem]{Definition}
\newtheorem{example}[theorem]{Example}
\newtheorem{remark}[theorem]{Remark}
\newtheorem{notation}[theorem]{Notation}
\newtheorem{question}[theorem]{Question}
\newtheorem{claim}[theorem]{Claim}
\newtheorem{conjecture}[theorem]{Conjecture}

\title{Ramanujan-like series for $1/\pi^2$ and String Theory}

\author{Gert Almkvist}
\address{Institute of Algebraic Meditation, Fogdar\"{o}d 208 S-24333 H\"{o}\"{o}r, SWEDEN}
\email{gert.almkvist@yahoo.se}

\author{Jesús Guillera}
\address{Av.\ Ces\'areo Alierta, 31 esc.~izda 4$^\circ$--A, Zaragoza, SPAIN}
\email{jguillera@gmail.com}

\dedicatory{Dedicated to Herbert Wilf on his $80^{th}$ birthday}

\date{}
\keywords{Ramanujan-like series for $1/\pi^2$; Hypergeometric Series; Calabi-Yau differential equations; Mirror map; Yukawa coupling; Gromov-Witten potential}
\subjclass[2010]{33C20; 14J32}

\maketitle

\begin{abstract}
Using the machinery from the theory of Calabi-Yau differential equations, we find formulas for $1/\pi^2$ of hypergeometric and non-hypergeometric types.
\end{abstract}

\section{Introduction.}

Almost 100 years ago, Ramanujan found 17 formulas for $1/\pi$. The most spectacular was
\[
\sum_{n=1}^{\infty }\frac{\left(\frac12\right)_{n}\left(\frac14\right)_{n}\left(\frac34\right)_{n}}{n!^{3}}(26390n+1103) \frac{1}{99^{4n+2}}=\frac{\sqrt{2}}{4 \pi},
\]
where $(a)_{0}=1$ and $(a)_{n}=a(a+1)\cdots(a+n-1)$ for $n>1$ is the Pochhammer symbol. The formulas were not proved until in the 1980:ies by the Borwein brothers using modular forms (see \cite{Bo} and the recent surveys \cite{BaBeCh} and \cite{Zu2}).

In 2002 the second author found seven similar formulas for $1/\pi^2$. Three of them were proved using the WZ-method (see \cite{Gu1},\cite{Gu3}, \cite{Gu4}). Others, like
\[
\sum_{n=1}^{\infty} \frac{\left(\frac12 \right)_n \left(\frac18\right)_{n}\left(\frac38\right)_n \left(\frac58\right)_n\left(\frac78\right)_{n}}{n!^5}(1920n^{2}+304n+15) \frac{1}{7^{4n}} = \frac{56\sqrt{7}}{\pi^2}
\]
(see \cite{Gu2}, \cite{Gu4}), were found using PSLQ to find the triple $(1920,304,15)$ after guessing $z=7^{-4}$. This was inspired by the similar formula for $1/\pi$
\[
\sum_{n=1}^{\infty }\frac{\left(\frac12\right)_{n}\left(\frac14\right)_{n}\left(\frac34\right)_{n}}{n!^{3}}(40n+3)\frac{1}{7^{4n}} =\frac{49\sqrt{3}}{9\pi}.
\]

To avoid guessing $z$, the second author, using $ 5\times 5-$matrices, developed a technique to find $z$, while instead guessing a rather small rational number $k.$ To that purpose one had to solve an equation of type
\[
\frac{1}{6}\log ^{3}(q)-\nu_{1}\log (q)-\nu_{2}-T(q)=0,
\]
where $\nu_{1}$ depends on $k$ linearly, $\nu_{2}$ is a constant, and $T(q)$ is a certain power series (see \cite{Gu5}).
It was suggested by Wadim Zudilin that $T(q)$ had to do with the Yukawa coupling $K(q)$ of the fourth order pullback of the fifth order differential equation satisfied by the sum for general $z.$ The exact relation is
\[
(q\frac{d}{dq})^{3}T(q)=1-K(q).
\]
This is explained and proved here. The reason that it works is that all differential equations involved are Calabi-Yau. The theory results in a simplified and very fast Maple program to find $z$. As a result we mention the new formula
\begin{equation}\label{new-ramalike}
\frac{1}{\pi^{2}}=32 \sum_{n=0}^{\infty}\frac{(6n)!}{3 \cdot n!^{6}}(532n^{2}+126n+9)\frac{1}{10^{6n+3}},
\end{equation}
where the summands contain no infinite decimal fractions. However this is not a BBP-type (Bailey-Borwein-Plouffe) series \cite{BaBoMaWi} and, due to the factorials, it is not useful to extract individual decimal digits of $1/\pi^2$. (The look we have written the formula above is courtesy by Pigulla).

\section{Calabi-Yau differential equations.}

\subsection{Formal definitions.}

A Calabi-Yau differential equation is a $4^{th}$ order differential equation with rational coefficients
\begin{equation*}
y^{(4)}+c_{3}(z)y^{\prime \prime \prime}+c_{2}(z)y^{\prime \prime}+c_{1}(z)y^{\prime}+c_{0}(z)y=0
\end{equation*}
satisfying the following conditions.
\par \textbf{1.} It is MUM (Maximal Unipotent Monodromy), i.e. the indicial equation at $z=0$ has zero as a root of order $4$. It means that there is a Frobenius solution of the following form%
\begin{equation*}
y_{0}=1+A_{1}z+A_{2}z^{2}+\cdots,
\end{equation*}
\begin{equation*}
y_{1}=y_{0}\log (z)+B_{1}z+B_{2}z^{2}+\cdots,
\end{equation*}
\begin{equation*}
y_{2}=\frac{1}{2}y_{0}\log ^{2}(z)+(B_{1}z+B_{2}z^{2}+\cdots)\log (z)+C_{1}z+C_{2}z^{2}+\cdots,
\end{equation*}
\begin{equation*}
y_{3}=\frac{1}{6}y_{0}\log ^{3}(z)+\frac{1}{2}(B_{1}z+B_{2}z^{2}+\cdots)\log
^{2}(z)+(C_{1}z+C_{2}z^{2}+\cdots)\log (z)+D_{1}z+D_{2}z^{2}+\cdots.
\end{equation*}
It is very useful that Maple's "formal\_sol" produces the four solutions in exactly this form (though labeled $1-4$)
\par \textbf{2.} The coefficients of the equation satisfy the identity
\begin{equation*}
c_{1}=\frac{1}{2}c_{2}c_{3}-\frac{1}{8}c_{3}^{3}+c_{2}^{\prime }-\frac{3}{4}%
c_{3}c_{3}^{\prime }-\frac{1}{2}c_{3}^{\prime \prime}.
\end{equation*}
\par\textbf{3.} Let $t=$ $y_{1}/y_{0}.$ Then
\begin{equation*}
q=\exp (t)=z+e_{2}z^{2}+\cdots
\end{equation*}%
can be solved
\begin{equation*}
z=z(q)=q-e_{2}q^{2}+\cdots,
\end{equation*}%
which is called the "mirror map". We also construct the "Yukawa coupling" defined by
\begin{equation*}
K(q)=\frac{d^{2}}{dt^{2}}(\frac{y_{2}}{y_{0}}).
\end{equation*}
This can be expanded in a Lambert series
\begin{equation*}
K(q)=1+\sum_{d=1}^{\infty }n_{d}\frac{d^{3}q^{d}}{1-q^{d}},
\end{equation*}%
where the $n_{d}$ are called "instanton numbers". For small $d$ the $n_{d}$ are conjectured to count rational curves of degree $d$ on the corresponding Calabi-Yau manifold. Then the third condition is
\par \textbf{(a)} $y_{0}$ has integer coefficients.
\par \textbf{(b)} $q$ has integer coefficients.
\par \textbf{(c)} There is a fixed integer $N_{0}$ such that all $ N_{0}n_{d} $ are integers. \\
In \cite{Al1} the first author shows how to discover Calabi-Yau differential equations.

\subsection{Pullbacks of $5^{th}$ order equations.}

The condition \textbf{2} is equivalent to
\begin{equation*}
\mathbf{2'}: \qquad
\begin{vmatrix}
y_{0} & y_{3} \\
y_{0}^{\prime} & y_{3}^{\prime}
\end{vmatrix}
=
\begin{vmatrix}
y_{1} & y_{2} \\
y_{1}^{\prime} & y_{2}^{\prime}
\end{vmatrix}.
\end{equation*}
This means that the six wronskians formed by the four solutions to our Calabi-Yau equation reduce to five. Hence they satisfy a $5^{th}$ order differential equation
\begin{equation*}
w^{(5)}+d_{4}w^{(4)}+d_{3}w^{\prime \prime \prime }+d_{2}w^{\prime \prime}+d_{1}w^{\prime }+d_{0}w=0.
\end{equation*}
The condition $\textbf{2}$ for the $4^{th}$ order equation leads to a corresponding condition for the $5^{th}$ order equation
\begin{equation*}
\mathbf{2_5}: \qquad d_{2}=\frac{3}{5}d_{3}d_{4}-\frac{4}{25}d_{4}^{3}+\frac{3}{2}d_{3}^{\prime }-
\frac{6}{5}d_{4}d_{4}^{\prime }-d_{4}^{\prime \prime}.
\end{equation*}

Conversely given a fifth order equation satisfying \textbf{2}$_{\text{\textbf{5}}}$ with solution $w_{0}$ we can find a pullback, i.e. a fourth order equation with solutions $y_{0},y_{1},\dots$ such that $w_{0}=z(y_{0}y_{1}^{\prime }-y_{0}^{\prime }y_{1})$ . There is another pullback, $\widehat{y}$ which often cuts the degree into half. It was discovered by Yifan Yang and is simply a multiple $\widehat{y}=gy$ of the ordinary pullback where
\begin{equation*}
g=z^{-1/2}\exp (\frac{3}{10}\int d_{4}dz).
\end{equation*}
In the proof below, all formulas contain only quotients of solutions so the factor $g$ cancels, so it is irrelevant if we use ordinary or YY-pullbacks. Since the $q(z)$ are the same, so are the inverse functions $z(q).$

\subsection{The proof.}
Consider
\[
w_{0}(z)=\sum_{n=0}^{\infty }\frac{(1/2)_{n}(s_1)_{n}(1-s_1)_{n}(s_2)_{n}(1-s_2)_{n}}{n!^{5}}(\rho z)^{n}=
\sum_{n=0}^{\infty }A_{n}z^{n},
\]
which satisfies the differential equation
\[
\left\{\theta ^{5}-\rho z(\theta +\frac{1}{2})(\theta +s_1)(\theta +1-s_1)(\theta+s_2)(\theta +1-s_2)\right\} w_{0}=0,
\]
where $\theta =z\frac{d}{dz}$. The equation satisfies $\mathbf{2'}$, so
\[
w_{0}=z(y_{0}y_{1}^{\prime }-y_{0}^{\prime }y_{1})
\]
where $y_{0}$ and $y_{1}$ satisfy a fourth order differential equation (the ordinary pullback). We will consider the following 14 cases (compare the 14 hypergeometric Calabi-Yau equations in the "Big Table" (see \cite{AlEnStZu}).

\begin{center}
{\setlength{\extrarowheight}{0.75ex}
\vskip 0.5cm \textbf{Table 1. Hypergeometric cases} \vskip 0.25cm
\begin{tabular}{|l||l|l|l|l|}
\hline
\# & $s_1$ & $s_2$ & $\rho$ & $A_{n}$ \\[1.1ex] \hline \hline
$\widetilde{1}$ & $1/5$ & $2/5$ & $4\cdot 5^{5}$ & $\binom{2n}{n}^{3}\binom{3n}{n}\binom{5n}{2n}$ \\[0.75ex] \hline
$\widetilde{2}$ & $1/10$ & $3/10$ & $4\cdot 8\cdot 10^{5}$ & $\binom{2n}{n}^{2}\binom{3n}{n}\binom{5n}{2n}\binom{10n}{5n}$ \\[0.75ex] \hline
$\widetilde{3}$ & $1/2$ & $1/2$ & $4\cdot 2^{8}$ & $\binom{2n}{n}^{5}$ \\[0.75ex]  \hline
$\widetilde{4}$ & $1/3$ & $1/3$ & $4\cdot 3^{6}$ & $\binom{2n}{n}^{3}\binom{3n}{n}^{2}$ \\[0.75ex]  \hline
$\widetilde{5}$ & $1/2$ & $1/3$ & $4\cdot 2^{4}\cdot 3^{3}$ & $\binom{2n}{n}^{4}\binom{3n}{n}$ \\[0.75ex]  \hline
$\widetilde{6}$ & $1/2$ & $1/4$ & $4\cdot 2^{10}$ & $\binom{2n}{n}^{4}\binom{4n}{2n}$ \\[1.1ex]  \hline
$\widetilde{7}$ & $1/8$ & $3/8$ & $4\cdot 2^{16}$ & $\binom{2n}{n}^{3}\binom{4n}{2n}\binom{8n}{4n}$ \\[0.75ex]  \hline
$\widetilde{8}$ & $1/6$ & $1/3$ & $4\cdot 2^{4}\cdot 3^{6}$ & $\binom{2n}{n}^{3}\binom{4n}{2n}\binom{6n}{2n}$ \\[0.75ex]  \hline
$\widetilde{9}$ & $1/12$ & $5/12$ & $4\cdot 12^{6}$ & $\binom{2n}{n}^{3}\binom{6n}{2n}\binom{12n}{6n}$ \\[0.75ex]  \hline
$\widetilde{10}$ & $1/4$ & $1/4$ & $4\cdot 2^{12}$ & $\binom{2n}{n}^{3}\binom{4n}{2n}^{2}$ \\[0.75ex]  \hline
$\widetilde{11}$ & $1/4$ & $1/3$ & $4\cdot 12^{3}$ & $\binom{2n}{n}^{3}\binom{3n}{n}\binom{4n}{2n}$ \\[0.75ex]  \hline
$\widetilde{12}$ & $1/6$ & $1/4$ & $4\cdot 2^{10}\cdot 3^{3}$ & $\binom{2n}{n}^{2}\binom{3n}{n}\binom{4n}{2n}\binom{6n}{3n}$ \\[0.75ex]  \hline
$\widetilde{13}$ & $1/6$ & $1/6$ & $4\cdot 2^{8}\cdot 3^{6}$ & $\binom{2n}{n}\binom{3n}{n}^{2}\binom{6n}{3n}^{2}$ \\[0.75ex]  \hline
$\widetilde{14}$ & $1/2$ & $1/6$ & $4\cdot 2^{8}\cdot 3^{3}$ & $\binom{2n}{n}^{3}\binom{3n}{n}\binom{6n}{3n}$ \\[0.75ex]  \hline
\end{tabular}
} \vskip 0.5cm
\end{center}

Assume that the formula
\[
\sum_{n=0}^{\infty }A_{n}(a+bn+cn^{2})z^{n}=\frac{1}{\pi^{2}},
\]
is a Ramanujan-like one; that is the numbers $a$, $b$, $c$ and $z$ are algebraic. Then, in \cite{Gu5} it is conjectured that we have an expansion
\begin{equation}\label{expansion}
\sum_{n=0}^{\infty}A_{n+x}(a+b(n+x)+c(n+x)^{2})z^{n+x}=\frac{1}{\pi^{2}}-\frac{k}{2}x^{2}+\frac{j}{24}\pi^{2}x^{4}
+O(x^{5}),
\end{equation}
where $k$ and $j$ are rational numbers. It holds in all known examples (in fact $3k$ and $j$ are integers). However there is a better argument to support the conjecture. It consists in comparing with the cases $_3F_2$ of Ramanujan-type series for $1/\pi$, for which the second author proved in \cite{Gu5} that $k$ must be rational.

In $A_x$ we replace $x!$ by $\Gamma(x+1)$ (Maple does it automatically). Later we use the harmonic number $H_n=1+1/2+\cdots+1/n$ which is replaced by $H_x=\psi(x+1)-\gamma$, where $\psi(x)=\Gamma'(x)/\Gamma(x)$ and $\gamma$ is Euler's constant.

\par Expansion (\ref{expansion}) can be reformulated in the way
\begin{equation}\label{re-expansion}
\sum_{n=0}^{\infty }\frac{A_{n+x}}{A_x}(a+b(n+x)+c(n+x)^{2})z^{n}=\frac{1}{z^{x}A_{x}}
( \frac{1}{\pi ^{2}}-\frac{k}{2}x^{2}+\frac{j}{24}\pi ^{2}x^{4}+\cdots).
\end{equation}
Write
\[
\sum_{n=0}^{\infty }\frac{A_{n+x}}{A_x}z^{n}=\sum_{i=0}^{\infty }a_{i}x^{i}, \quad
\sum_{n=0}^{\infty }\frac{A_{n+x}}{A_x}(n+x)z^{n}=\sum_{i=0}^{\infty }b_{i}x^{i}
\]
and
\[
\sum_{n=0}^{\infty }\frac{A_{n+x}}{A_x}(n+x)^{2}z^{n}=\sum_{i=0}^{\infty }c_{i}x^{i},
\]
where $a_{i}$, $b_{i}$, $c_{i}$ are power series in $z$ with rational coefficients. They are related to
the solutions $w_{0},w_{1},w_{2},w_{3},w_{4}$ of the fifth order differential equation
\begin{align}
& w_{0}=a_{0} \nonumber \\
& w_{1}=a_{0}\log (z)+a_{1} \nonumber \\
& w_{2}=a_{0}\dfrac{\log ^{2}(z)}{2}+a_{1}\log (z)+a_{2} \nonumber \\
& w_{3}=a_{0}\dfrac{\log ^{3}(z)}{6}+a_{1}\dfrac{\log ^{2}(z)}{2}+a_{2}\log (z)+a_{3} \nonumber \\
& w_{4}=a_{0}\dfrac{\log ^{4}(z)}{24}+a_{1}\dfrac{\log ^{3}(z)}{6}+a_{2} \dfrac{\log ^{2}(z)}{2}+a_{3}\log (z)+a_{4}. \nonumber \end{align}
We also have $b_{0}=za_{0}^{\prime}$ and $b_{k}=a_{k-1}+za_{k}^{\prime}$ for $k=1,2,3,4.$
\par If we write the expansion of $A_x$ in the form
\begin{equation}\label{Ax-expan}
A_x=1+\frac{e}{2} \pi^2 x^2-h \zeta(3) x^3+ (\frac{3e^2}{8} - \frac{f}{2}) \pi^4 x^4 + O(x^5),
\end{equation}
then for the right hand side $M$ of (\ref{re-expansion}) we have
\[
M=\frac{1}{z^{x}A_{x}}(\frac{1}{\pi ^{2}}-\frac{k}{2}x^{2}+\frac{j}{24}\pi
^{2}x^{4}+\cdots)=m_{0}+m_{1}x+m_{2}x^{2}+m_{3}x^{3}+m_{4}x^{4}+\cdots,
\]
where
\begin{align}
& m_{0}=\dfrac{1}{\pi ^{2}}, \nonumber \\
& m_{1}=-\dfrac{1}{\pi ^{2}}\log (z), \nonumber \\
& m_{2}=\dfrac{1}{\pi ^{2}}\left\{ \dfrac{1}{2}\log ^{2}(z)-\dfrac{\pi ^{2}}{2}(k+e)\right\}, \nonumber \\
& m_{3}=\dfrac{1}{\pi ^{2}}\left\{ -\dfrac{1}{6}\log ^{3}(z)+\dfrac{\pi ^{2}}{2}(k+e)\log (z)+h\zeta (3)\right\}. \nonumber
\end{align}
and
\[ 2m_0m_4-2m_1m_3+m_2^2=\frac{j}{12}+\frac{k^2}{4}+ek+f. \]
Here
\[ e=\dfrac{5}{3}+\cot^{2}(\pi s_1)+\cot^{2}(\pi s_2), \qquad f=\frac{1}{\sin^{2}(\pi s_1) \sin^{2}(\pi s_2)} \]
and
\[ h=\dfrac{2}{\zeta (3)}\left\{ \zeta (3,1/2)+\zeta (3,s_1)+\zeta (3,1-s_1)+\zeta(3,s_2)+\zeta (3,1-s_2)\right\}, \] where
\[ \zeta (s,a)=\sum_{n=0}^{\infty }\frac{1}{(n+a)^{s}} \]
is Hurwitz $\zeta -$function. If one uses $A_{n}$ defined by binomial coefficients Maple finds the values of $e$ and $h$ directly. We conjecture that the 14 pairs $(s_1,s_2)$ given in the table are the only rational $(s_1,s_2)$ between $0$ and $1$ making $h$ an integer. Note that the same $(s_1,s_2)$ give the only hypergeometric Calabi-Yau differential equations (see \cite{Al2} and \cite{Al3}).

\begin{center}
\vskip 0.5 cm \textbf{Table 2. Values of e, h, f} \vskip 0.25cm
{\setlength{\extrarowheight}{2.0ex}
\begin{tabular}{|c||c|c|c|c|c|c|c|c|c|c|c|c|c|c|}
\hline
\# & $\widetilde{1}$  &  $\widetilde{2}$  &  $\widetilde{3}$  & $\widetilde{4}$  & $\widetilde{5}$  &  $\widetilde{6}$  &  $\widetilde{7}$  & $\widetilde{8}$  & $\widetilde{9}$  & $\widetilde{10}$  & $\widetilde{11}$  & $\widetilde{12}$  &  $\widetilde{13}$    &  $\widetilde{14}$    \\[2.0ex] \hline \hline
e  &  $\dfrac{11}{3}$   &  $\dfrac{35}{3}$   &  $\dfrac{5}{3}$   &  $\dfrac{7}{3}$   &  $2$   &  $\dfrac{8}{3}$   &  $\dfrac{23}{3}$   &  $5$   &  $\dfrac{47}{3}$   &   $\dfrac{11}{3}$   &      $3$ &  $\dfrac{17}{3}$    &   $\dfrac{23}{3}$   &  $\dfrac{14}{3}$    \\[2.0ex] \hline
h  &  $42$   &  $290$   &  $10$   &  $18$   &  $14$   &   $24$  &  $150$   &  $70$   &  $486$   &   $38$   &  $28$    &   $80$   &   $122$   &  $66$    \\[2.0ex] \hline
f  &  $\dfrac{16}{5}$  &  $16$   &  $1$   &  $\dfrac{16}{9}$   &  $\dfrac{4}{3}$   &  $2$   &   $8$  &  $\dfrac{16}{3}$   &   $16$  &   $4$   &   $\dfrac{8}{3}$   &  $8$    &   $16$   &   $4$   \\[2.0ex] \hline
\end{tabular}
} \vskip 0.5cm
\end{center}

Now we want to use many of the identities for the wronskians in \cite[pp.4-5]{Al2}. Therefore we invert the formulas
\begin{align}
& a_{0}=w_{0}, \nonumber \\
& a_{1}=w_{1}-w_{0}\log (z), \nonumber \\
& a_{2}=w_{2}-w_{1}\log (z)+w_{0}\dfrac{\log ^{2}(z)}{2}, \nonumber \\
& a_{3}=w_{3}-w_{2}\log (z)+w_{1}\dfrac{\log ^{2}(z)}{2}-w_{0}\dfrac{\log ^{3}(z)}{6}, \nonumber \\
& a_{4}=w_{4}-w_{3}\log (z)+w_{2}\dfrac{\log ^{2}(z)}{2}-w_{1}\dfrac{\log ^{3}(z)}{6}+w_{0}\dfrac{\log ^{4}(z)}{24} \nonumber \end{align}
and
\begin{align}
& b_{0}=zw_{0}^{\prime}, \nonumber \\
& b_{1}=z(w_{1}^{\prime}-w_{0}^{\prime}\log (z)), \nonumber \\
& b_{2}=z(w_{2}^{\prime}-w_{1}^{\prime}\log (z)+w_{0}^{\prime }\dfrac{\log^{2}(z)}{2}), \nonumber \\
& b_{3}=z(w_{3}^{\prime}-w_{2}^{\prime}\log (z)+w_{1}^{\prime }\dfrac{\log^{2}(z)}{2}-w_{0}^{\prime}\dfrac{\log ^{3}(z)}{6}), \nonumber \\
& b_{4}=z(w_{4}^{\prime}-w_{3}^{\prime}\log (z)+w_{2}^{\prime }\dfrac{\log^{2}(z)}{2}-w_{1}^{\prime}\dfrac{\log ^{3}(z)}{6}+w_{0}^{\prime}\dfrac{\log^{4}(z)}{24}). \nonumber
\end{align}
The key equation in \cite{Gu5} is
\begin{equation}\label{mH}
m_{3}=H_{0}m_{0}-H_{1}m_{1}+H_{2}m_{2},
\end{equation}
where
\[
H_{0}=\frac{a_{0}b_{4}-a_{4}b_{0}}{a_{0}b_{1}-a_{1}b_{0}}, \qquad
H_{1}=\frac{a_{0}b_{3}-a_{3}b_{0}}{a_{0}b_{1}-a_{1}b_{0}}, \qquad
H_{2}=\frac{a_{0}b_{2}-a_{2}b_{0}}{a_{0}b_{1}-a_{1}b_{0}}.
\]
We get ($g$ is a multiplicative factor defined in \cite[p.5]{Al2}. It will cancel out)
\[
a_{0}b_{1}-a_{1}b_{0}=z
\left|
\begin{tabular}{ll}
$w_{0}$ & $w_{1}$ \\
$w_{0}^{\prime}$ & $w_{1}^{\prime}$
\end{tabular}
\right|
=z^3 g \, y_0^2.
\]
("The double wronskian is almost the square")
\[
a_{0}b_{2}-a_{2}b_{0}=z\left|
\begin{tabular}{ll}
$w_{0}$ & $w_{2}$ \\
$w_{0}^{\prime }$ & $w_{2}^{\prime }$
\end{tabular}
\right|-z\log (z) \left|
\begin{tabular}{ll}
$w_{0}$ & $w_{1}$ \\
$w_{0}^{\prime }$ & $w_{1}^{\prime }$
\end{tabular}
\right|=z^{3} g \left\{ y_{0}y_{1}-y_{0}^{2}\log (z)\right\}.
\]
It follows
\[
H_{2}=\frac{z^{3} g \left\{ y_{0}y_{1}-y_{0}^{2}\log (z)\right\} }{z^{3} g \, y_{0}^{2}}=\frac{y_{1}}{y_{0}}
-\log (z)=\log (q)-\log (z)=\log (\frac{q}{z}).
\]
Furthermore
\begin{align}
a_{0}b_{3}-a_{3}b_{0} &=z \left|
\begin{tabular}{ll}
$w_{0}$ & $w_{3}$ \\
$w_{0}^{\prime }$ & $w_{3}^{\prime}$
\end{tabular}
\right|-z\log(z) \left|
\begin{tabular}{ll}
$w_{0}$ & $w_{2}$ \\
$w_{0}^{\prime }$ & $w_{2}^{\prime}$
\end{tabular} \right|
+z\frac{\log^{2}(z)}{2} \left|
\begin{tabular}{ll}
$w_{0}$ & $w_{1}$ \\
$w_{0}^{\prime }$ & $w_{1}^{\prime}$
\end{tabular}
\right|
\nonumber \\
&=z^{3} g \left\{ \frac{1}{2}y_{1}^{2}-y_{0}y_{1}\log (z)+y_{0}^{2}\frac{\log^{2}(z)}{2}\right\} \nonumber
\end{align}
and
\[
H_{1}=\frac{1}{2}(\frac{y_{1}}{y_{0}})^{2}-\frac{y_{1}}{y_{0}}\log (z)+
\frac{\log ^{2}(z)}{2}=\frac{1}{2}\log^2(\frac{q}{z}).
\]
Finally we have that
\begin{align}
a_{0}b_{4}-a_{4}b_{0} &=z\left|
\begin{tabular}{ll}
$w_{0}$ & $w_{4}$ \\
$w_{0}^{\prime }$ & $w_{4}^{\prime}$
\end{tabular}
\right| -z\log (z) \left|
\begin{tabular}{ll}
$w_{0}$ & $w_{3}$ \\
$w_{0}^{\prime }$ & $w_{3}^{\prime}$
\end{tabular} \nonumber
\right|
\\
&+z\frac{\log ^{2}(z)}{2} \left|
\begin{tabular}{ll}
$w_{0}$ & $w_{2}$ \\
$w_{0}^{\prime}$ & $w_{2}^{\prime}$
\end{tabular}
\right|-z\frac{\log ^{3}(z)}{6} \left|
\begin{tabular}{ll}
$w_{0}$ & $w_{1}$ \\
$w_{0}^{\prime}$ & $w_{1}^{\prime}$
\end{tabular} \right| \nonumber
\\
&=z^{3} g \left\{ \frac{1}{2}(y_{1}y_{2}-y_{0}y_{3})-\frac{1}{2}y_{1}^{2}\log
(z)+y_{0}y_{1}\frac{\log ^{2}(z)}{2}-y_{0}^{2}\frac{\log ^{3}(z)}{6}\right\} \nonumber
\end{align}
and
\[
H_{0}=\frac{1}{2}(\frac{y_{1}}{y_{0}}\frac{y_{2}}{y_{0}}-\frac{y_{3}}{y_{0}})-\frac{1}{2}t^{2}\log (z)+
t\frac{\log ^{2}(z)}{2}-\frac{\log ^{3}(z)}{6}.
\]

Substituting these formulas into (\ref{mH}), we obtain
\begin{multline} \nonumber
\dfrac{1}{\pi ^{2}}\left\{ -\dfrac{1}{6}\log ^{3}(z)+\dfrac{\pi ^{2}}{2}(k+e)\log (z)+h\zeta (3)\right\}
\\=\frac{1}{\pi ^{2}}\left\{ \frac{1}{2}(\frac{y_{1}}{y_{0}}\frac{y_{2}}{y_{0}}-\frac{y_{3}}{y_{0}})-\frac{1}{2}t^{2}\log (z)+t\frac{\log ^{2}(z)}{2}-\frac{\log ^{3}(z)}{6}\right\}
\\+\frac{1}{\pi ^{2}}\log (z)\left\{ \frac{t^{2}}{2}-t\log (z)+\frac{\log^{2}(z)}{2}\right\} +\frac{1}{\pi ^{2}}(t-\log (z))(\frac{\log ^{2}(z)}{2}-\frac{\pi ^{2}}{2}(k+e)),
\end{multline}
which simplifies to
\[
\frac{1}{2}(\frac{y_{1}}{y_{0}}\frac{y_{2}}{y_{0}}-\frac{y_{3}}{y_{0}})-
\dfrac{\pi ^{2}}{2}(k+e)\log (q)-h\zeta (3)=0.
\]
Here
\[
\Phi =\frac{1}{2}(\frac{y_{1}}{y_{0}}\frac{y_{2}}{y_{0}}-\frac{y_{3}}{y_{0}})
\]
is wellknown in String Theory and is called the Gromov-Witten potential (up to a multiplicative constant, (see \cite[p.28]{CoKa}). It is connected to the Yukawa coupling $K(q)$ by
\[ (q\frac{d}{dq})^{3}\Phi =K(q). \]
Writing $\Phi =\frac{1}{6}\log^{3}(q)-T(q)$ (see lemma 2.1) we get the following equation for finding $q$ and hence $z$ for given $k$
\begin{equation}\label{Tq}
\frac{1}{6}t^3-\dfrac{\pi^{2}}{2}(k+e)t-h\zeta (3)-T(q)=0, \qquad q=\exp(t).
\end{equation}
We look for real algebraic solutions of $z$. To look for alternating series, that is if $z<0$ all we need to do is replacing $q=\exp(t)$ with $q=-\exp(t)$  in (\ref{Tq}). In order to make a quick sieve of the solutions, once we get $q$ we compute $j$ and see if it is an integer (or rational with small denominator). Using the formulas \cite[eqs. 3.48 \& 3.50]{Gu5} we find
\begin{equation}\label{Uq}
j=12\left\{ \frac{1}{\pi^{4}}(\frac{1}{2}t^{2}-q\frac{d}{dq}T(q)-
\frac{\pi ^{2}}{2}(k+e))^{2}-\frac{k^{2}}{4}-ek-f\right\}.
\end{equation}
\begin{lemma}
The function $T(q)$ is a power series with $T(0)=0$.
\end{lemma}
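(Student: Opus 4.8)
The plan is to read off the shape of $\Phi$ directly from the prescribed form of the Frobenius basis in condition \textbf{1}, and then transport everything to the variable $q$ through the mirror map. First I would set $\widetilde{B}(z)=t-\log z=(y_{1}-y_{0}\log z)/y_{0}$; by condition \textbf{1} the numerator is $B_{1}z+B_{2}z^{2}+\cdots$, so $\widetilde{B}(z)$ is a power series in $z$ with no constant term, which is exactly the assertion $q=\exp(t)=z+e_{2}z^{2}+\cdots$. Since the equation is MUM this series inverts to $z=z(q)=q-e_{2}q^{2}+\cdots$, so substituting $z=z(q)$ turns any power series in $z$ vanishing at $0$ into a power series in $q$ vanishing at $0$; this is the fact I would invoke at the very end.

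Next I would divide the Frobenius expansions of $y_{2}$ and $y_{3}$ by $y_{0}$, using that the $\log z$--coefficient of $y_{2}/y_{0}$ is again $(y_{1}-y_{0}\log z)/y_{0}=\widetilde{B}$, to obtain
\[
\frac{y_{2}}{y_{0}}=\frac{\log^{2}z}{2}+\widetilde{B}\log z+\widetilde{C},\qquad
\frac{y_{3}}{y_{0}}=\frac{\log^{3}z}{6}+\frac{\widetilde{B}}{2}\log^{2}z+\widetilde{C}\log z+\widetilde{D},
\]
where $\widetilde{C}=(C_{1}z+\cdots)/y_{0}$ and $\widetilde{D}=(D_{1}z+\cdots)/y_{0}$ are power series in $z$ with no constant term. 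Plugging these together with $y_{1}/y_{0}=\log z+\widetilde{B}$ into $\Phi=\tfrac12(\tfrac{y_{1}}{y_{0}}\tfrac{y_{2}}{y_{0}}-\tfrac{y_{3}}{y_{0}})$ and comparing with $t^{3}=(\log z+\widetilde{B})^{3}$, a short computation shows that $\tfrac{y_{1}}{y_{0}}\tfrac{y_{2}}{y_{0}}-\tfrac{y_{3}}{y_{0}}-\tfrac13 t^{3}$ is free of $\log z$ and equals $-\tfrac13\widetilde{B}^{3}+\widetilde{B}\widetilde{C}-\widetilde{D}$. Hence
\[
\Phi=\frac16\log^{3}q-\frac12\Big(\frac13\widetilde{B}^{3}-\widetilde{B}\widetilde{C}+\widetilde{D}\Big),
\]
and comparing with $\Phi=\tfrac16\log^{3}q-T(q)$ identifies $T=\tfrac12\big(\tfrac13\widetilde{B}^{3}-\widetilde{B}\widetilde{C}+\widetilde{D}\big)$, a power series in $z$ divisible by $z$; by the first paragraph it is therefore a power series in $q$ with $T(0)=0$.

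As a consistency check, the same conclusion follows from $(q\frac{d}{dq})^{3}\Phi=K(q)$: since $(q\frac{d}{dq})^{3}\big(\tfrac16\log^{3}q\big)=1$, this gives $(q\frac{d}{dq})^{3}T=1-K(q)$, and the Lambert expansion shows $1-K(q)=-\sum_{d\ge1}n_{d}\,d^{3}q^{d}/(1-q^{d})$ is a power series in $q$ with no constant term, so applying $(q\frac{d}{dq})^{-1}$ three times --- legitimate precisely because no constant terms ever arise --- recovers $T\in q\,\mathbb{Q}[[q]]$. I expect the only delicate point in the main argument to be the bookkeeping of the logarithms: one must check that the coefficients of $\log^{2}z$ and of $\log z$ in $\tfrac{y_{1}}{y_{0}}\tfrac{y_{2}}{y_{0}}-\tfrac{y_{3}}{y_{0}}-\tfrac13 t^{3}$ vanish identically. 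This cancellation is forced by the precise normalization of the Frobenius solutions in condition \textbf{1} together with the definition $q=\exp(y_{1}/y_{0})$; once it is confirmed, the rest --- that $\widetilde{B},\widetilde{C},\widetilde{D}$ lie in $z\,\mathbb{Q}[[z]]$ and that the mirror map preserves this property --- is routine.
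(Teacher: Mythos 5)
Your argument is correct and is essentially the paper's own proof: both divide the Frobenius solutions by $y_{0}$, observe that the log-free parts $\widetilde{B},\widetilde{C},\widetilde{D}$ (the paper's $\beta_{1},\beta_{2},\beta_{3}$) vanish at $z=0$, and verify by the same short computation that $\Phi-\tfrac16\log^{3}q=-\tfrac12\bigl(\tfrac13\widetilde{B}^{3}-\widetilde{B}\widetilde{C}+\widetilde{D}\bigr)$, a power series without constant term. The only differences are cosmetic (the paper substitutes $\log z=\log q-\beta_{1}$ before expanding, you subtract $\tfrac13 t^{3}$ afterwards) plus your added consistency check via the Yukawa coupling, which the paper does not include.
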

\begin{proof}
We have
\[ y_{1}=y_{0}\log (z)+\alpha _{1} \]
which implies
\[ \frac{y_{1}}{y_{0}}=\log (q)=\log (z)+\frac{\alpha _{1}}{y_{0}}=\log (z)+\beta _{1} \]
and hence
\[
\log (z)=\log (q)-\beta _{1},
\]
where $\alpha _{1}$ and $\beta _{1}=\dfrac{\alpha _{1}}{y_{0}}$ are power series without constant term. Furthermore
\[ y_{2}=y_{0}\frac{\log ^{2}(z)}{2}+\alpha _{1}\log (z)+\alpha _{2} \]
leads to
\[
\frac{y_{2}}{y_{0}}=\frac{1}{2}(\log (q)-\beta _{1})^{2}+\beta _{1}(\log (q)-\beta _{1})+\beta _{2}
=\frac{1}{2}\log ^{2}(q)+\beta _{2}-\frac{1}{2}\beta _{1}^{2},
\]
where $\beta _{2}=\dfrac{\alpha _{2}}{y_{0}}$ with $\beta _{2}(0)=0$. Finally
\[ y_{3}=y_{0}\frac{\log ^{3}(z)}{6}+\alpha _{1}\frac{\log ^{2}(z)}{2}+\alpha_{2}\log (z)+\alpha _{3} \]
and
\[ \frac{y_{3}}{y_{0}}=\frac{1}{6}(\log (q)-\beta _{1})^{3}+\frac{1}{2}\beta _{1}(\log (q)-\beta _{1})^{2}+\beta _{2}(\log (q)-\beta _{1})+\beta _{3}, \]
where $\beta _{3}=\dfrac{\alpha _{3}}{y_{0}}$ with $\beta _{3}(0)=0$.
Collecting terms we have
\[
\frac{1}{2}(\frac{y_{1}}{y_{0}}\frac{y_{2}}{y_{0}}-\frac{y_{3}}{y_{0}})= \frac{1}{6}\log ^{3}(q)-
\frac{1}{2}(\beta _{3}-\beta _{1}\beta _{2}+\frac{1}{3}\beta _{1}^{3}),
\]
which proves the lemma.
\end{proof}

\section{Computations}

\subsection{Hypergeometric differential equations.}

In only half of the $14$ cases have we found solutions to the equation (\ref{Tq}), where the indicator $j$ is an integer. Using \cite[eqs. 3.47-3.48]{Gu5}, we have the following formula for computing $c$
\begin{equation}\label{hyper-cz}
\tau=\frac{c}{\sqrt{1-\rho z}},
\end{equation}
where
\[ \tau^{2}=\frac{j}{12}+\frac{k^{2}}{4}+ek+f. \]
Then $a$ and $b$ can be computed by \cite[eq. 3.45]{Gu5} or by PSLQ. Here are our results where the series converges

\begin{center}
{\setlength{\extrarowheight}{1.5ex}
\vskip 0.5cm \textbf{Table 3. Convergent hypergeometric Ramanujan-like series for $1/\pi^2$} \vskip 0.25cm
\begin{tabular}{|l||c|c|c|c|c|c|c|}
\hline \hline
\# & $k$ & $j$ & $z_{0}$ & $\tau ^{2}$ & $a$ & $b$ & $c$ \\[1.5ex] \hline \hline
$\widetilde{3}$ & $1$ & $25$ & $-\dfrac{1}{2^{12}}$ & $5$ & $\dfrac{1}{8}$ & $1$ & $\dfrac{5}{2}$ \\[1.5ex] \hline
$\widetilde{3}$ & $5$ & $305$ & $-\dfrac{1}{2^{20}}$ & $41$ & $\dfrac{13}{128}$ & $\dfrac{45}{32}$ & $\dfrac{205}{32}$ \\[1.5ex] \hline
$\widetilde{5}$ & $\dfrac{2}{3}$ & $16$ & $\dfrac{1}{2^{12}}$ & $\dfrac{37}{9}$ & $\dfrac{1}{16}$ & $\dfrac{9}{16}$ & $\dfrac{37}{24}$ \\[1.5ex] \hline
$\widetilde{5}$ & $\dfrac{8}{3}$ & $112$ & $(\frac{5\sqrt{5}-11}{8})^{3}$ & $\dfrac{160}{9}$ & $56-25\sqrt{5}$ & $303-135\sqrt{5}$ & $\dfrac{1220}{3}-180\sqrt{5}$ \\[1.5ex] \hline
$\widetilde{6}$ & $2$ & $80$ & $\dfrac{1}{2^{16}}$ & $15$ & $\dfrac{3}{32}$ & $\dfrac{17}{16}$ & $\dfrac{15}{4}$ \\[1.5ex] \hline
$\widetilde{7}$ & $8$ & $992$ & $\dfrac{1}{2^{18}7^{4}}$ & $168$ & $\dfrac{15}{392}\sqrt{7}$ & $\dfrac{38}{49}\sqrt{7}$ & $\dfrac{240}{49}\sqrt{7}$ \\[1.5ex] \hline
$\widetilde{8}$ & $\dfrac{5}{3}$ & $85$ & $-\dfrac{1}{2^{18}}$ & $\dfrac{193}{9}$ & $\dfrac{15}{128}$ & $\dfrac{183}{128}$ & $\dfrac{965}{192}$ \\[1.5ex] \hline
$\widetilde{8}$ & $15$ & $2661$ & $-\dfrac{1}{2^{18}3^{6}5^{3}}$ & $\dfrac{1075}{3}$ & $\dfrac{29}{640}\sqrt{5}$ & $\dfrac{693}{640}\sqrt{5}$ & $\dfrac{2709}{320}\sqrt{5}$ \\[1.5ex] \hline
$\widetilde{8}$ & $\dfrac{8}{3}$ & $160$ & $\dfrac{1}{2^{6}5^{6}}$ & $\dfrac{304}{9}$ & $\dfrac{36}{375}$ & $\dfrac{504}{375}$ & $\dfrac{2128}{375}$ \\[2.0ex] \hline
$\widetilde{11}$ & $3$ & $157$ & $-\dfrac{1}{2^{12}3^{4}}$ & $27$ & $\dfrac{5}{48}$ & $\dfrac{21}{16}$ & $\dfrac{21}{4}$ \\[1.5ex] \hline
$\widetilde{12}$ & $7$ & $757$ & $-\dfrac{1}{2^{22}3^{3}}$ & $123$ & $\dfrac{15}{768}\sqrt{3}$ & $\dfrac{278}{768}\sqrt{3}$ & $\dfrac{205}{96}\sqrt{3}$ \\[1.5ex] \hline
\end{tabular}
} \vskip 0.5cm
\end{center}
In all the hypergeometric cases there a singular solution when $k=j=0$ (it has not a corresponding Ramanujan-like series). For that solution we have $z=1/\rho$, $a=b=c=0$.
\par In addition we have found the solutions $\widetilde{3}$: $k=0$, $j=3$, $z=-2^{-8}$, $a=1/4$, $b=3/2$, $c=5/2$ and $\widetilde{11}$: $k=1/3$, $j=13$, $z=-2^{-12}$, $a=3/16$, $b=25/16$, $c=43/12$, for which the corresponding series are "divergent" \cite{GuiZu}.
\par Although our new program, which evaluates the function $T(q)$ much faster, has allowed us to try all rational values of $k$ of the form $k=i/60$ with $0 \leq i \leq 1200$ the only new series that we have found is for $\widetilde{8}$ with $k=8/3$, and it is
\[
\sum_{n=0}^{\infty }\frac{(6n)!}{n!^{6}}(532n^{2}+126n+9)\frac{1}{10^{6n}}=\frac{375}{4\pi^{2}},
\]
that is (\ref{new-ramalike}). A brief story of the discovery of the other $10$ formulas in the table is in \cite{Gu7}.
\par Finally we give a hypergeometric example of different nature in case $\widetilde{3}$. Take $z_0=-2^{-10}$, $q_0=q(z_0)$, $t_0=\log|q_0|$ and $T(q)$ of $\widetilde{3}$, we find using PSLQ, among the quantities $T(q_0)$, $t_0^3$, $t_0^2 \, \pi$, $t_0 \, \pi^2$, $\pi^3$, $\zeta(3)$ the following remarkable relation:

\[ \frac{1}{6}(t_0+\pi)^3-\frac{5}{6}\pi^2(t_0+\pi)- \frac{\pi^3}{3}-10\zeta(3)-T(q_0)=0. \]
The theory we have developed allows to understand that the last relation has to do with the following formula proved by Ramanujan \cite[p.41]{Be}:
\[ \sum_{n=0}^{\infty} \frac{(-1)^n}{2^{10n}} \binom{2n}{n}^5 (4n+1)=\frac{2}{\Gamma^4(\frac{3}{4})}. \]
To see why we guess that
\[
\frac{\Gamma^4(\frac{3}{4})}{2} \sum_{n=0}^{\infty} \frac{(-1)^n}{2^{10(n+x)}} \binom{2n+2x}{n+x}^5 [4(n+x)+1]=
 1-\pi x + \frac{\pi^2}{2} x^2 + \frac{\pi^3}{6} x^3 - \frac{19\pi^4}{24} x^4 + O(x^5)
\]
by expanding the first side numerically. Hence
\[
2^{10x} \, \binom{2x}{x}^{-5} \frac{2}{\Gamma^4(\frac{3}{4})}(1-\pi x + \frac{\pi^2}{2} x^2 +
\frac{\pi^3}{6} x^3)=m_0+m_1x+m_2x^2+m_3x^3+O(x^4)
\]
and we get $m_0$, $m_1$, $m_2$, $m_3$. Finally we use identity (\ref{mH}) replacing $\log(z)$ with $\log 2^{-10}$.

\subsection{Non-hypergeometric differential equations.}

If we write the ordinary pullback in the form
\[
\theta_z^4 y = \left[ e_3(z) \theta_z^3  + e_2(z) \theta_z^2  + e_1(z) \theta_z  +e_0(z) \right] y, \qquad \theta_z=z \frac{d}{dz},
\]
then, the generalization of the relation (\ref{hyper-cz}) is
\begin{equation}\label{nonhyper-tau-c}
\tau = c \left( \exp \int \frac{e_3(z)}{2z} dz \right), \qquad \tau^2=\frac{j}{12}+\frac{k^{2}}{4}+ek+f.
\end{equation}
We say that a solution is singular if it does not has a corresponding Ramanujan-like series. We conjecture that $h$ is the unique rational number such that singular solutions exists. The numbers $e$ and $f$ are not so important because they can be absorbed in $k$ and $j$ respectively. However, to agree with the hypergeometric cases we will choose $e$ and $f$ in such a way that a singular solution takes place at $k=j=0$. This fact allows us to determine the values of the numbers $e$, $h$ and $f$ from (\ref{Tq}) and (\ref{Uq}) using the PSLQ algorithm. For many sequences $A(n)$ there exists a finite value of $z$ which is singular, then we can get this value solving the equation
\[ \frac{d z(q)}{dq}=0. \]
In the sequel, we will show in several tables the rational values of the invariants $e$, $h$ and $f$ followed by the series found.

\begin{center}
\vskip 0.5cm \textbf{Table $\alpha$}. \vskip 0.25 cm
{\setlength{\extrarowheight}{1.75ex}
\begin{tabular}{|c||c|c|c|c|}
\hline
& $ \#39=A \ast \alpha $ & $ \#61=B\ast \alpha $ & $\#37=C\ast \alpha $ & $\#66=D\ast \alpha $ \\[1.75ex] \hline \hline
$e, h, f$ & $1, \dfrac{14}{3}, \dfrac{1}{3} $ & $\dfrac{4}{3}, \dfrac{26}{3}, \dfrac{4}{9}$ & $2, \dfrac{56}{3}, \dfrac{2}{3}$ & $4, \dfrac{182}{3}, \dfrac{4}{3}$  \\[2.0ex] \hline
\end{tabular}
} \vskip 0.5cm
\end{center}
For $A \ast \alpha$, taking $k=1/3$ we get $j=5$, and we discover the series
\[
\sum_{n=0}^{\infty} \binom{2n}{n}^2 \sum_{i=0}^{n} \binom{n}{i}^2\binom{2i}{i}\binom{2n-2i}{n-i}\frac{(-1)^n}{2^{8n}} (40n^2+26n+5)=\frac{24}{\pi^2}.
\]
This series was first conjectured by Zhi-Wei Sun \cite{Sun} inspired by p-adic congruences.

\begin{center}
\vskip 0.5cm \textbf{Table $\epsilon$}. \vskip 0.25cm
{\setlength{\extrarowheight}{1.75ex}
\begin{tabular}{|c||c|c|c|c|}
\hline
& $ \#122=A \ast \epsilon $ & $ \#170=B\ast \epsilon $ & $C\ast \epsilon $ & $D\ast \epsilon $ \\[1.75ex] \hline \hline
$e, h, f$ & $\dfrac{7}{6}, \dfrac{45}{8}, \dfrac{1}{2}$ & $\dfrac{3}{2}, \dfrac{77}{8}, \dfrac{2}{3}$ & $\dfrac{13}{6}, \dfrac{157}{8}, 1$ & $\dfrac{25}{6}, \dfrac{493}{8}, 2$  \\[2.0ex] \hline
\end{tabular}
} \vskip 0.5cm
\end{center}
For $B \ast \epsilon$, taking $k=1$ we get $j=22$, and we find the formula
\[
\sum_{n=0}^{\infty} \binom{2n}{n} \binom{3n}{n} \sum_{i=0}^n \binom{n}{i}^2\binom{2i}{n}^2 \frac{1}{2^{7n}3^{3n}}(1071n^2+399n+46)=\frac{576}{\pi^2}.
\]

\begin{center}
\vskip 0.5cm \textbf{Table $\beta$}. \vskip 0.25cm
{\setlength{\extrarowheight}{1.75ex}
\begin{tabular}{|c||c|c|c|c|}
\hline
& $ \#40=A \ast \beta $ & $ \#49=B\ast \beta $ & $\#43=C\ast \beta $ & $\#67=D\ast \beta $ \\[1.75ex] \hline \hline
$e,h,f$ & $\dfrac{2}{3}, 3, \dfrac{1}{4}$ & $1, 7, \dfrac{1}{4}$ & $\dfrac{5}{3}, 17, \dfrac{1}{4}$ & $\dfrac{11}{3}, 59, \dfrac{1}{4}$  \\[2.0ex] \hline
\end{tabular}
} \vskip 0.5cm
\end{center}
For $A \ast \beta$, taking $k=1$ we get $j=13$, and we have the series
\[
\sum_{n=0}^{\infty} \binom{2n}{n}^2 \sum_{i=0}^{n} \binom{2i}{i}^2 \binom{2n-2i}{n-i}^2  \frac{1}{2^{10n}} (36n^2+12n+1)=\frac{32}{\pi^2}.
\]
For $B \ast \beta$, taking $k=1/3$ we get $j=1$, and we find
\[
\sum_{n=0}^{\infty} \frac{(3n)!}{n!^3} \sum_{i=0}^{n} \binom{2i}{i}^2 \binom{2n-2i}{n-i}^2  \frac{1}{2^{9n}} (25n^2-15n-6)=\frac{192}{\pi^2}.
\]
\newpage
\begin{center}
\vskip 0.5cm \textbf{Table $\delta$}. \vskip 0.25cm
{\setlength{\extrarowheight}{1.75ex}
\begin{tabular}{|c||c|c|c|c|}
\hline
& $ A \ast \delta $ & $ B\ast \delta $ & $C\ast \delta $ & $D\ast \delta $ \\[1.75ex] \hline \hline
$e, h, f$ & $1, \dfrac{9}{2}, \dfrac{17}{36}$ & $\dfrac{4}{3}, \dfrac{17}{2}, \dfrac{7}{12}$ & $2, \dfrac{37}{2}, \dfrac{29}{36}$ & $4, \dfrac{121}{2}, \dfrac{53}{36}$  \\[2.0ex] \hline
\end{tabular}
} \vskip 0.5cm
\end{center}
For $A \ast \delta$, taking $k=2/3$ we get $j=28/3$, and we have
\[
\sum_{n=0}^{\infty} \binom{2n}{n}^2 \sum_{i=0}^n \frac{(-1)^i 3^{n-3i} (3i)!}{i!^3} \binom{n}{3i} \binom{n+i}{i} \frac{(-1)^n}{3^{6n}} (803n^2+416n+68)=\frac{486}{\pi^2}.
\]

\begin{center}
\vskip 0.5cm \textbf{Table $\theta$}. \vskip 0.25cm
{\setlength{\extrarowheight}{1.75ex}
\begin{tabular}{|c||c|c|c|c|}
\hline
& $ A \ast \theta $ & $ B\ast \theta $ & $C\ast \theta $ & $D\ast \theta $ \\[1.75ex] \hline \hline
$e, h, f$ & $\dfrac{2}{3}, -4, 1$ & $1, 0, 1$ & $\dfrac{5}{3}, 10, 1$ & $\dfrac{11}{3}, 52, 1$  \\[2.0ex] \hline
\end{tabular}
} \vskip 0.5cm
\end{center}
For $A \ast \theta$, taking $k=2$ we get $j=56$, and we discover the series
\[
\sum_{n=0}^{\infty} \binom{2n}{n}^2 \sum_{i=0}^{n} 16^{n-i} \binom{2i}{i}^3 \binom{2n-2i}{n-i} \frac{(-1)^n}{2^{13n}} (18n^2+7n+1)=\frac{4 \sqrt{2}}{\pi^2}.
\]
This series was first discovered by Zhi-Wei Sun \cite{Sun} inspired by p-adic congruences.

\par \noindent
For $B \ast \theta$ we get $T(q)=0$ and from the equations we see that for every rational $k$ the value of $j$ is rational as well. Hence for every rational value of $k$ we get a Ramanujan-like series for $1/\pi^2$. For example, for $k=160/3$, we have

\[
\sum_{n=0}^{\infty} \frac{3n!}{n!^3} \sum_{i=0}^{n} 16^{n-i} \binom{2i}{i}^3 \binom{2n-2i}{n-i} P(n) \frac{(-1)^n}{640320^{3n}}=\frac{(2^{4} \cdot 3 \cdot 5 \cdot 23\cdot 29)^{3}}{\pi ^{2}},
\]
where
\[
P(n)=22288332473153467n^{2}+16670750677895547n+415634396862086,
\]
which is the "square" \cite{Zu4} of the brothers Chudnovsky's formula \cite{BaBeCh}
\[
\sum_{n=0}^{\infty }(-1)^{n}\frac{(6n)!}{(3n)!n!^3} (545140134n+13591409)\frac{1}{640320^{3n}}=\frac{53360\sqrt{640320}}{\pi}.
\]
For $C \ast \theta$, taking $k=1$ we get $j=25$ and taking $k=5$ we get $=305$, and we recover the two series proved by W. Zudilin in \cite{Zu1} by doing a quadratic transformation of case $\widetilde{3}$.

\par \noindent In \cite{Al5} the first author, by transforming known formulas given by the second author, found formulas for $1/\pi^2$ where the coefficients belong to the Calabi-Yau equations $\widehat{3}$, $\widehat{5}$, $\widehat{6}$, $\widehat{7}$, $\widehat{8}$, $\widehat{11}$, $\widehat{12}$. Here we list some new ones for the cases $\widehat{3}, \widehat{5}, \widehat{8},\widehat{11}$ and $\# 77$, some found by solving equation (\ref{Tq}).

\subsubsection*{Transformation $\widehat{5}$}
Here
\begin{equation*}
A_{n}=\sum_{i=0}^{n}(-1)^{i} 1728^{n-i}\dbinom{n}{i}\dbinom{2i}{i}^{4}\dbinom{3i}{i}
\end{equation*}
Using $e=2$, $h=14$, $f=4/3$ we find for $k=8/3$ that $j=112$ and $z_{0}=-[320(131+61\sqrt{5})]^{-1}.$ To find the coefficients we had to use the formulas in \cite{Al5}. The resulting formula is
\begin{multline}
\sum_{n=0}^{\infty} A_{n}
\left(    (28765285482\sqrt{5}-64321133730)+ (10068363-4502709\sqrt{5})n \right. \\ \left. +(54\sqrt{5}-122)n^2     \right)\frac{(-1)^n}{(320(131+61\sqrt{5}))^{n}} = \frac{300(1170059408\sqrt{5}-24977012149)}{\pi^{2}}. \nonumber
\end{multline}
By the PSLQ algorithm, trying products of powers of $2$ and $7$ in the denominator of $z$, we see that
\begin{equation*}
\sum_{n=0}^{\infty }A_{n}(n^{2}-63n+300)\frac{1}{1792^{n}}=\frac{4704}{\pi^{2}},
\end{equation*}
but we cannot find the pair $(k,j)$ with our program because the convergence in this case is too slow.

\subsubsection*{Transformation $\widehat{\mathbf{8}}$}
Here
\begin{equation*}
A_{n}=\sum_{i=0}^{n}(-1)^{k}6^{6n-6i}\dbinom{n}{i}\dbinom{2i}{i}^{3}\dbinom{4i}{2i}\dbinom{6i}{2i}.
\end{equation*}%
Using $e=5$, $h=70$, $f=16/3$ we find for $k=5/3$ that $j$ $=85$ and $z_{0}=308800^{-1}$. This allows us to get the formula
\begin{equation*}
\sum_{n=0}^{\infty }A_{n}(16777216n^{2}-3336192n-2912283)\frac{1}{308800^{n}}=
\frac{3\cdot 5^{5} \cdot 193^{2}}{5^{5}\pi^{2}}.
\end{equation*}%
For $k=\dfrac{8}{3}$ we get $j=160$ and the formula
\begin{equation*}
\sum_{n=0}^{\infty }(-1)^{n}A_{n}(48828125n^{2}+17859375n+3649554)\frac{1}{953344^{n}}=
\frac{2^{8}\cdot 3\cdot 7^{5}\cdot 19^{2}}{5^{4}\pi^{2}}.
\end{equation*}

\subsubsection*{Transformation $\widehat{\mathbf{11}}$}
Here
\begin{equation*}
A_{n}=\sum_{i=0}^{n}(-1)^{i}6912^{n-i}\dbinom{n}{i}\dbinom{2i}{i}^{3}\dbinom{3i}{i}\dbinom{4i}{2i}.
\end{equation*}
Using $e=3$, $h=28$, $f=8/3$ we find for $k=1/3$ that $j=13$ and we get the formula
\begin{equation*}
\sum_{n=0}^{\infty }A_{n}(512n^{2}-1992n-225)\frac{1}{11008^{n}}=\frac{3\cdot 43^{2}}{2\pi^{2}}.
\end{equation*}

\subsubsection*{Transformation $\widehat{\mathbf{3}}$}
Here
\begin{equation*}
A_{n}=\sum_{i=0}^{n}(-1)^{i}1024^{n-i}\dbinom{n}{i}\dbinom{2i}{i}^{5}.
\end{equation*}
Transforming two divergent series in \cite{GuiZu} with $z_{0}=-2^{-8}$ and $z_{0}=-1$ respectively (the second one given only implicitly), we obtain two (slowly) convergent formulas
\begin{equation*}
\sum_{n=0}^{\infty }A_{n}(2n^{2}-18n+5)\frac{1}{1280^{n}}=\frac{100}{\pi^{2}}
\end{equation*}
and
\begin{equation*}
\sum_{n=0}^{\infty }A_{n}(n^{2}-2272n+392352)\frac{1}{1025^{n}}=\frac{16\cdot 5253125}{\pi^{2}}.
\end{equation*}
This last identity converges so slowly that the power of our computers seems not enough to check it numerically.

\subsubsection*{Transformation $\# 77$}

Here
\begin{equation*}
A_{n}=\dbinom{2n}{n}\sum_{i=0}^{n}\dbinom{n}{i}\dbinom{2i}{i}^{3}\dbinom{4i}{2i}.
\end{equation*}
The pullback is equivalent to $\widetilde{6}$ (i.e. has the same $K(q)$), so we try the same parameters; $e=8/3$, $h=24$, $f=2$. For $k=2$ we get $j=80$ and $z_{0}=1/65540$. To find $a,b,c$ we have to find the transformation between $\widetilde{6}$ and \#77. Indeed
\begin{equation*}
\sum_{n=0}^{\infty} \dbinom{2n}{n}^{4}\dbinom{4n}{2n}z^{n}=\frac{1}{\sqrt{1-4z}}\sum_{n=0}^{\infty
}A_{n}(\frac{z}{1+4z})^{n}
\end{equation*}
(the sequence of numbers $\widetilde{6}$ is in the left side) and using the method in \cite{Al5}, (see also \cite{AlStZu}), we obtain
\begin{equation*}
\sum_{n=0}^{\infty }A_{n}(402653184n^{2}+114042880n+10051789)\frac{1}{65540^{n}}=\frac{5^{2}\cdot 29^{3}\cdot 113^{3}}{2^{6}\pi ^{2}\sqrt{16385}}.
\end{equation*}

\section{Supercongruences}

Zudilin \cite{Zu3} observed that the hypergeometric formulas for $1/\pi^2$ lead to supercongruences of the form
\[
\sum_{n=0}^{p-1}A_n(a+bn+cn^2)z^n \equiv a \jacobi{d}{p} p^2 \pmod{p^5},
\]
where the notation $(d \ \! | \! \ p)$ stands for the Legendre symbol. Our computations show that for our new Ramanujan-like series for $1/\pi^2$ (\ref{new-ramalike}), we have again a supercongruence following Zudilin's pattern, namely

\begin{equation*}
\sum_{n=0}^{p-1}\dbinom{2n}{n}^{3}\dbinom{4n}{2n}\dbinom{6n}{2n}(532n^{2}+126n+9)\frac{1}{1000000^{n}}\equiv 9p^{2} \pmod{p^5},
\end{equation*}
valid for primes $p\geq 7$.

For superconguences for $\widetilde{5}$ and $k=8/3$, which involves algebraic numbers, see \cite{Gu6}.
For the non-hypergeometric formulas the best one can hope for is a congruence $\pmod{p^3}.$ We give some new ones which agree with Zudilin's observations for \cite[eq. 35]{Zu3}.

\subsubsection*{With Hadamard product $ \#170=B \ast \epsilon$}

\[
\sum_{n=0}^{p-1} \binom{2n}{n} \binom{3n}{n} \sum_{i=0}^n \binom{n}{i}^2\binom{2i}{n}^2 \frac{1}{2^{7n}3^{3n}}(1071n^2+399n+46) \equiv 46 p^2 \pmod{p^3},
\]
for primes $p \geq 5$.

\subsubsection*{With Hadamard product $ \#49 \ B\ast \beta$}
\begin{equation*}
\sum_{n=0}^{p-1}\sum_{i=0}^{n}\dbinom{2n}{n}\dbinom{3n}{n}\dbinom{2i}{i}^{2} \dbinom{2n-2i}{n-i}^{2}(25n^{2}-15n-6)\frac{1}{512^{n}}\equiv -6p^2 \pmod{p^3},
\end{equation*}
for primes $p \geq 7$.

\subsubsection*{With Hadamard product $A \ast \delta$}
\[
\sum_{n=0}^{p-1} \binom{2n}{n}^2 \sum_{i=0}^n \frac{(-1)^i 3^{n-3i} (3i)!}{i!^3} \binom{n}{3i} \binom{n+i}{i} \frac{(-1)^n}{3^{6n}} (803n^2+416n+68) \equiv 68 p^2 \pmod{p^3},
\]
for primes $p \geq 5$.

\subsubsection*{With Hadamard product $C \ast \theta $}

\begin{equation*}
\sum_{n=0}^{p-1}\sum_{i=0}^{n}16^{n-i} \dbinom{2n}{n}\dbinom{4n}{2n}\dbinom{2i}{i}^{3}\dbinom{2n-2i}{n-i}
\frac{18n^{2}-10n-3}{80^{2n}} \equiv -3 \jacobi{5}{p} p^{2}\pmod{p^3},
\end{equation*}
for primes $p \geq 5$ and
\begin{multline*}
\sum_{n=0}^{p-1}\sum_{i=0}^{n}16^{n-i}\dbinom{2n}{n}\dbinom{4n}{2n}\dbinom{2i}{i}^{3}\dbinom{2n-2i}{n-i}
\frac{1046529n^{2}+227104n+16032}{1050625^{n}} \\ \equiv 16032 \jacobi{41}{p} p^{2}\pmod{p^3},
\end{multline*}
for primes $p \geq 7$ and $p\neq 41$.

\section{Conclusion}

We have recovered the $10$ hypergeometric Ramanujan series in \cite{Gu7} and found a new one that the second author missed. But more important, finding the relation among the function $T(q)$ and the Gromov-Witten potential has allowed us to generalize the conjectures of the second author in \cite{Gu5} to the case of non-hypergeometric Ramanujan-Sato like series. Then, by getting $e$, $h$ and $f$ from a singular solution (it always exists) we have solved our equations finding several nice non-hypergeometric series for $1/\pi^2$. Finally, we have checked the corresponding supercongruences of Zudilin-type.

\section*{Appendix 1. A Maple program for case $\widetilde{\mathbf{8}}$.}

We use the YY-pullback found in \cite{Al2}. In order to also treat the case when $z$ and $q$ are negative we introduce a sign $u=\pm 1$.

\begin{verbatim}

with(combinat):
p(1):=expand(-36*(2592*n^4+5184*n^3+6066*n^2+3474*n+755)):
p(2):=expand(2^4*3^10*(4*n+3)*(4*n+5)*(12*n+11)*(12*n+13)):
V:=proc(n) local j:
if n=0 then 1; else sum(stirling2(n,j)*z^j*Dz^j,j=1..n); fi; end:
L:=collect(V(4)+add(add(z^m*coeff(p(m),n,k)*V(k),m=1..2),k=0..4),Dz):
Order:=51:
with(DEtools):
r:=formal_sol(L,[Dz,z],z=0):
y0:=r[4]: y1:=r[3]: y2:=r[2]: y3:=r[1]:
q:=series(exp(y1/y0),z=0,51): m:=solve(series(q,z)=s,z):
convert(simplify(series(subs(z=m,1/2*(y1*y2/y0^2-y3/y0)),s=0,51)),
        polynom):
T:=coeff(-%,ln(s),0):
e:=5: h:=70: f:=16/3:

H:=proc(u) local k,y,z0,q0,j,y0,yy,i,jj; y0:=-10;
for i from 0 to 60 do
k:=i/3;
Digits:=50;
yy:=fsolve(y^3/6-Pi^2/2*(k+e)*y-h*Zeta(3)-subs(s=u*exp(y),T),y=y0);
q0:=exp(%); y0:=yy;
z0:=evalf(subs(s=u*q0,convert(m,polynom)));
j:=evalf(12*(1/Pi^4*(1/2*log(q0)^2-subs(s=u*q0,s*diff(T,s))
         -Pi^2/2*(k+e))^2-k^2/4-e*k-f));
jj:=convert(j,fraction,12);
if denom(jj)<30 then print([k,1/z0,j]); fi; od; end:

\end{verbatim}

Copy and paste the program in Maple and execute $H(1)$ and $H(-1)$. You will get the following results:

\begin{align}
&\left[ \frac{8}{3},        1.00000000000000000000000000000000000000000000000000  \, 10^6,     \right. \nonumber \\
&\qquad \left. \frac{}{}    160.000000000000000000000000000000000000000000000007               \right] \nonumber \\
&\left[ \frac{5}{3},        -2.62143999999999999999999999999999999999999999999996 \, 10^5,     \right. \nonumber \\
&\qquad \left. \frac{}{}    85.0000000000000000000000000000000000000000000000000               \right] \nonumber \\
&\left[ 15,                 -2.3887871999999999999999999999999999999999999999995  \, 10^{10}, \frac{}{} \right. \nonumber \\
&\qquad \left. \frac{}{}    2660.99999999999999999999999999999999999999999999996               \right] \nonumber
\end{align}

To use the program with other cases one has to change the values of $e$, $h$, $f$ and replace the polynomials $p(1)$, $p(2)$, etc, with those corresponding to the new pullback and the number $2$ in $m=1..2$, with the total number of polynomials.

\subsection*{Acknowledgement.}

We would like to thank Wadim Zudilin who, even in his exile on the other side of the earth, has shown great interest in our work.


\begin{thebibliography}{99}

\bibitem{Al2}
\textsc{G.Almkvist},
Calabi-Yau differential equations of degree $2$ and $3$ and Yifan Yang's pullback, (2006);
(arXiv: math/0612215).

\bibitem{Al3}
\textsc{G.Almkvist},
$5^{th}$ order differential equations related to Calabi-Yau differential equations, (2007);
(arXiv:math/0703261).

\bibitem{Al5}
\textsc{G. Almkvist},
Transformations of Jesús Guillera's formulas for $1/\pi^2$, (2009);
(arXiv:0911.4849).

\bibitem{Al1}
\textsc{G. Almkvist},
The art of finding Calabi-Yau differntial equations,
in Gems in Experimental Mathematics T. Amdeberhan, L.A. Medina, and V.H. Moll (eds.),
\emph{Contemp. Math.} \textbf{517} (2010), Amer. Math. Soc., 1--18;
(arXiv:0902.4786).

\bibitem{AlZu}
\textsc{G. Almkvist} and \textsc{W. Zudilin},
Differential equations, mirror maps and zeta values,
in Mirror Symmetry V, N. Yui, S.-T. Yau, and J.D. Lewis (eds.), AMS/IP Studies in Advanced Mathematics 38 (2007), International Press \& Amer. Math. Soc., 481--515;
(e-print math.NT/0402386)

\bibitem{AlEnStZu}
\textsc{G.Almkvist, C.van Enckevort, D.van Straten, W.Zudilin,} Tables of Calabi-Yau equations, (2005).
(arXiv: math/0507430).

\bibitem{AlStZu}
\textsc{G. Almkvist, D. Van Straten and W. Zudilin}
Generalizations of Clausen's formula and algebraic transformations of Calabi--Yau differential equations,
\emph{Proc. Edinburgh Math. Soc.} (accepted for publication),
(Preprint at http://www.mpim-bonn.mpg.de/preprints/send?bid=4023), (2009).

\bibitem{BaBeCh}
\textsc{N.D. Baruah, B.C. Berndt, H.H. Chan},
Ramanujan's series for $1/\pi$: A survey,
\emph{The Amer. Math. Monthly} \textbf{116} (2009) 567-587.; also available at the web site of Bruce Berndt and at the page \\ http://www.math.ilstu.edu/cve/speakers/Berndt-CVE-Talk.pdf.

\bibitem{Be}
\textsc{B.C. Berndt},
Ramanujan's Notebooks, Part II.
Springer-Verlag, New York, (1989).

\bibitem{Bo}
\textsc{J.M. Borwein, P.B. Borwein},
Pi and the AGM: A Study in Analytic Number Theory and Computational Complexity,
(Canadian Mathematical Society Series of Monographs and Advanced Texts), Jonh Wiley, New York, (1987).


\bibitem{BaBoMaWi}
\textsc{D.H. Bailey, J.M. Borwein, A. Mattingly, G. Wightwick}
The Computation of Previously Inaccessible Digits of $\pi^2$ and Catalan's Constant. (available at J. Borwein's site).

\bibitem{CoKa}
\textsc{D.A.Cox and S.Katz}
Mirror symmetry and algebraic Geometry,
AMS, Providence, (1999).

\bibitem{Gu1}
\textsc{J. Guillera},
Some binomial series obtained by the WZ-method.
\emph{Adv. in Appl. Math.} \textbf{29}, 599-603, (2002);
(arXiv:math/0503345).

\bibitem{Gu2}
\textsc{J. Guillera},
About a new kind of Ramanujan type series.
\emph{Exp. Math.} \textbf{12}, 507-510, (2003).

\bibitem{Gu3}
\textsc{J.~Guillera},
Generators of some Ramanujan formulas,
\emph{Ramanujan J.} \textbf{11}:1 (2006), 41--48.

\bibitem{Gu4}
\textsc{J. Guillera},
Series de Ramanujan: Generalizaciones y conjeturas. Ph.D. Thesis,
University of Zaragoza, Spain, (2007).

\bibitem{Gu5}
\textsc{J.~Guillera},
A matrix form of Ramanujan-type series for $1/\pi$.
in Gems in Experimental Mathematics T. Amdeberhan, L.A. Medina, and V.H. Moll (eds.),
\emph{Contemp. Math.} \textbf{517} (2010), Amer. Math. Soc., 189--206;
(arXiv:0907.1547).

\bibitem{Gu6}
\textsc{J.~Guillera},
Mosaic supercongruences of Ramanujan-type. (e-print arXiv:1007.2290).

\bibitem{Gu7}
\textsc{J.~Guillera},
Collection of Ramanujan-like series for $1/\pi^2$. Unpublished manuscript available at the author's web site.

\bibitem{GuiZu}
\textsc{J.~Guillera} and \textsc{W.~Zudilin},
"Divergent" Ramanujan-type supercongruences. \\
(e-print arXiv:1004.4337)

\bibitem{Sun}
\textsc{Zhi-Wei Sun},
List of conjectural formulas for powers of $\pi$ and other constants. \\
(e-print arXiv:1102.5649).

\bibitem{Zu1}
\textsc{W.~Zudilin},
Quadratic transformations and Guillera's formulae for $1/\pi^2$,
\emph{Mat. Zametki} \textbf{81}:3 (2007), 335--340;
English transl.,
\emph{Math. Notes} \textbf{81}:3 (2007), 297--301.
(arXiv:math/0509465v2).

\bibitem{Zu2}
\textsc{W.~Zudilin},
Ramanujan-type formulae for $1/\pi$: A second wind?. In Modular Forms and
String Duality. Banff, 3--8 June 2006, Yui, N., Verrill, H., Doran C.F. (eds.), Fields Inst. Commun. Ser. 54 (2008), Amer. Math. Soc. $\&$ Fields Inst., 179--188;
(e-print arXiv:0712.1332).

\bibitem{Zu4}
\textsc{W.~Zudilin},
More Ramanujan-type formulae for $1/\pi^2$:
\emph{Russian Math. Surveys} \textbf{62:3} (2007), 634--636.
Russian version in Uspekhi Mat. Nauk \textbf{62:3} (2007), 211--212.

\bibitem{Zu3}
\textsc{W.~Zudilin},
Ramanujan-type supercongruences,
\emph{J. Number Theory} \textbf{129}:8 (2009), 1848--1857.
(arXiv:0805.2788).

\end{thebibliography}
\end{document}